\newtheorem{thm}{Theorem}[section]
\newtheorem{prop}[thm]{Proposition}
\newtheorem{lemma}[thm]{Lemma}
\newtheorem{conj}[thm]{Conjecture} 
\numberwithin{equation}{section}
\newcommand{\R}{\mathbb{R}} 
\newcommand{\F}{\mathbb{F}}  
\newcommand{\Z}{\mathbb{Z}}
\newcommand{\N}{\mathbb{N}}
\newcommand{\set}[2]{\{#1 \: : \: #2\}}
\newcommand{\lv}{\lvert}
\newcommand{\rv}{\rvert}
\newcommand{\divides}{\: | \:}
\newcommand{\fqstar}{\F_q^*}
\newcommand{\fqx}{\F_q [x]}
\newcommand{\flr}[1]{\lfloor #1 \rfloor}
\newcommand{\angl}[1]{\langle #1 \rangle}
\newcommand{\gc}{\textnormal{gcd }}
\newcommand{\lc}{\textnormal{lcm }}
\title{Roots of Sparse Polynomials over a Finite Field}
\author{Zander Kelley}
\begin{document}
 %\vspace*{-26pt}
   %{\let\newpage\relax\maketitle}
\maketitle

\begin{abstract}
For a $t$-nomial $f(x) =  \sum_{i = 1}^t c_i x^{a_i} \in \mathbb{F}_q[x]$, we show that the number of distinct, nonzero roots of $f$ is
bounded above by $2 (q-1)^{1-\varepsilon} C^\varepsilon$, where $\varepsilon = 1/(t-1)$ and $C$ is the size of the largest coset  in $\mathbb{F}_q^*$ on which $f$ vanishes completely. Additionally, we describe a number-theoretic parameter depending only on $q$ and the exponents $a_i$ which provides a general and easily-computable upper bound for $C$.
We thus obtain a strict improvement over an earlier bound of Canetti et al.\ which is related to the uniformity of the Diffie-Hellman distribution. 
Finally, we conjecture that $t$-nomials over prime fields have only $O(t \log p)$ roots in $\mathbb{F}_p^*$ when $C = 1$.
\end{abstract}

%%%%%%%%%%%%%%%%%%%%%%%%%%%%%%%%%%%%%%%%%%%%%%%%%%%%%%%%%%%%%%%%%%%%%% 
\section{Introduction} 
%%%%%%%%%%%%%%%%%%%%%%%%%%%%%%%%%%%%%%%%%%%%%%%%%%%%%%%%%%%%%%%%%%%%%% 

Over the real numbers, the classical Descartes' Rule implies that the number of distinct, real roots of a $t$-nomial 
$f(x) = c_1 x^{a_1}  + \cdots + c_t x^{a_t} \in \R[x]$ is less than $2t$, regardless of its degree.
It is a natural algebraic problem to look for analogous sparsity-dependent bounds over other fields that are not algebraically closed.
In \cite{canetti}, Canetti et al. derive the following analogue of Descartes' Rule for polynomials in $\fqx$.

\begin{thm}(\cite{canetti}, Lemma 7) \label{D}
For $f(x) = c_1 x^{a_1}  + c_2 x^{a_2} + \cdots + c_t x^{a_t} \in \fqx$ (with $c_i$ nonzero), if $R(f)$ denotes the number of distinct, nonzero roots of $f$ in $\F_q$, then
$$R(f) \leq 2 (q-1)^{1-1/(t-1)} D^{1/(t-1)} + O\left( (q-1)^{1-2/(t-1)} D^{2/(t-1)} \right) ,$$
where
$$D(f) = \min_{i} \max_{j \neq i} \: \left( \gcd(a_i - a_j, q-1) \right) .$$
\end{thm}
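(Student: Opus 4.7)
The plan is to normalize $f$ and then induct on the number of terms $t$ by restricting to cosets of a suitable cyclic subgroup of $\F_q^*$.

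First, let $i_0$ be an index attaining the minimum in the definition of $D$, so that $d_j := \gcd(a_{i_0} - a_j,\, q-1) \leq D$ for every $j \neq i_0$. Since dividing out $c_{i_0} x^{a_{i_0}}$ does not affect nonzero roots, I may assume
\[
f(x) = 1 + \sum_{j \neq i_0} c'_j x^{b_j}, \qquad b_j := a_j - a_{i_0},
\]
with $\gcd(b_j, q-1) \leq D$ for all $j$. The base case $t = 2$ is immediate: $1 + c'_2 x^{b_2}$ has at most $\gcd(b_2, q-1) \leq D$ nonzero roots, matching the claimed $2D$.

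For the inductive step, fix some $j$ and let $H_j = \{x \in \F_q^* : x^{b_j} = 1\}$, a cyclic subgroup of order $|H_j| = d_j \leq D$. On each of the $(q-1)/|H_j|$ cosets $aH_j$, the $j$-th term is constant (value $c'_j a^{b_j}$) and merges with the constant term $1$, so that $f|_{aH_j}$ is a $(t-1)$-nomial in $h \in H_j$. Applying an inductive bound on each coset---viewing $H_j$ as a cyclic group of order $d_j$ in its own right---and summing yields an upper estimate of the form
\[
R(f) \;\leq\; \frac{q-1}{|H_j|} \cdot 2\,|H_j|^{1 - 1/(t-2)} (D')^{1/(t-2)},
\]
where $D'$ is the induced parameter for the restricted polynomial. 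Optimizing the choice of $j$ (and hence $|H_j|$), together with the bound $D' \leq D$, should produce the target exponent $1/(t-1)$.

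The main obstacle is formulating a sufficiently general inductive statement, since the restricted polynomial lives over an arbitrary cyclic group of order $|H_j|$ rather than $\F_q^*$ itself; the induced $D$-parameter involves $\gcd(b_k, |H_j|)$ in place of $\gcd(b_k, q-1)$, so the theorem must be stated for $t$-nomials over a general cyclic group, and the naive balance $D' = d_j$ collapses to the trivial bound $2(q-1)$. Careful accounting is also needed for the ``degenerate'' cosets where the combined coefficient $1 + c'_j a^{b_j}$ vanishes, so that $f|_{aH_j}$ collapses further to a $(t-2)$-nomial; these exceptional cases are the likely source of the lower-order error $O((q-1)^{1 - 2/(t-1)} D^{2/(t-1)})$. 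If the direct induction proves too delicate, an alternative is to partition instead by the subgroup $K = \bigcap_{j \neq i_0} H_j$ of order at most $\min_j d_j$---on each coset of which $f$ is in fact constant---and then bound the number of ``vanishing'' cosets by a separate counting argument on the quotient group $\F_q^*/K$.
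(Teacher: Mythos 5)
There is a genuine gap: your proposal is missing the central idea that makes this theorem work, and the inductive route you sketch provably cannot deliver the stated bound. The quantitative accounting is the problem. Restricting to the $(q-1)/d_j$ cosets of $H_j$ and applying an inductive bound of the shape $2\,d_j^{1-1/(t-2)}(D')^{1/(t-2)}$ on each coset gives, after summing,
$$R(f) \;\leq\; 2\,(q-1)\left(\frac{D'}{d_j}\right)^{1/(t-2)} ,$$
and since $D' \geq 1$ and $d_j \leq D$, this is never better than $2(q-1)/D^{1/(t-2)}$. Matching the target $2(q-1)^{1-1/(t-1)}D^{1/(t-1)}$ would force $d_j/D' \geq \left((q-1)/D\right)^{(t-2)/(t-1)}$, which is impossible unless $D$ is roughly $\sqrt{q}$ or larger. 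In the most important regime the theorem addresses --- $D$ small, say $D = 1$ --- every $H_j$ is the trivial subgroup, your decomposition is into singleton cosets, and the argument yields only $R(f) \leq q-1$. You partly noticed this (``collapses to the trivial bound''), but no fix is offered, and your fallback of intersecting the $H_j$ merely produces the subgroup of order $\delta = \gcd(b_2,\ldots,b_t,q-1)$, on whose cosets $f$ is constant; bounding the number of vanishing cosets is then exactly the original problem restated on the quotient group of order $(q-1)/\delta$, with no progress made.

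The actual mechanism (both in Canetti et al.\ and in this paper's strengthening, Theorem \ref{bound}, from which Theorem \ref{D} follows since $C \leq D$) is multiplicative degree reduction, not coset induction: one considers the multiples $e\,(a_2-a_1,\ldots,a_t-a_1) \bmod (q-1)$ for $e = 1,\ldots,n$ with $n \approx (q-1)/D$, and uses a pigeonhole or volume-packing argument in the torus $(\R/(q-1)\Z)^{t-1}$ (Lemma \ref{packing}) to find a single multiplier $e$ for which all exponents $ea_i \bmod (q-1)$ have absolute value at most $(q-1)/n^{1/(t-1)} \approx (q-1)^{1-1/(t-1)}D^{1/(t-1)}$. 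Then $x \mapsto x^e$ is (essentially) a bijection of $\fqstar$, so $R(f) = R(f(x^e)) \leq \deg$ of the reduced polynomial; the case $\gcd(e,q-1) = k > 1$ is handled by covering $\fqstar$ with $k$ cosets of $\langle g^k \rangle$, and the possibility that $f$ vanishes identically on such a coset is excluded because such a coset has size exceeding $D$, contradicting the constraint that vanishing on a coset of size $m$ forces each exponent to have a partner congruent to it modulo $m$ (Theorem \ref{coset}), whence $m \leq D$. Also, a small correction to your closing speculation: the error term in Theorem \ref{D} does not come from degenerate cosets; it comes from Canetti et al.'s discretization of the hypercube into sub-cubes in their pigeonhole step, which is exactly what this paper's Lemma \ref{packing} removes.
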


For $\vartheta \in \F_p^*$, the associated Diffie-Hellman distribution is defined by the random variable $(\vartheta^x, \vartheta^y, \vartheta^{xy})$ where $x$ and $y$  are uniformly random over $\{1,\ldots ,p-1\}$.
The Diffie-Hellman cryptosystem relies on the assumption that an attacker cannot easily determine $\vartheta^{xy}$ given the values of $\vartheta$, $\vartheta^x$, and $\vartheta^y$.
In \cite{canetti}, Canetti et al. showed that Diffie-Hellman distributions are very nearly uniform (which is an important property for the security of the cryptosystem), and the bound in Theorem \ref{D} was the central tool which powered their arguments.

Since then, the bound has been a useful tool for studying various algorithmic and number-theoretic problems:
 in \cite{approx} it was used to study the complexity of recovering a sparse polynomial from a small number of approximate values (which is relevant to the security of polynomial pseudorandom number generators),
 in \cite{vand} it was used to study the singularity of generalized Vandermonde matrices over $\F_q$,
 in \cite{exp} it was used to study the solutions of exponential congruences $x^x = a \bmod p$,
and  in \cite{binary} it was used to study the correlation of linear recurring sequences over $\F_2$.
The main result of this paper is a new bound (Theorem \ref{bound} of Section 2 below) improving
   Theorem \ref{D} by removing the asymptotic term and replacing $D$
   by a smaller, intrinsic parameter.

%%%%%%%%%%%%%%%%%%%%%%%%%%%%%%%%%%%%%%%%%%%%%%%%%%%%%%%%%%%%%%%%%%%%%% 
\section{Statement of Results} 
%%%%%%%%%%%%%%%%%%%%%%%%%%%%%%%%%%%%%%%%%%%%%%%%%%%%%%%%%%%%%%%%%%%%%% 

More recently in \cite{rojas1}, Bi, Cheng, and Rojas studied the computational complexity of deciding whether a $t$-nomial $f$ has a root in $\fqstar$.
They gave a sub-linear algorithm for this problem; we give a rough sketch here. 
First, they efficiently replace any instance with a $t$-nomial of degree bounded by $2(q-1)^{1-1/(t-1)}$ (while preserving the answer to
the decision problem), and then they compute the greatest common divisor of this instance and $x^{q-1} - 1$, which can be done in time proportional to the degree of the instance.
As a result of their investigation, they derive the following characterization of the roots of a sparse polynomial in $\fqx$.

\begin{thm} (\cite{rojas1}, Theorem 1.1) \label{delta}
For $f(x) = c_1 x^{a_1}  + c_2 x^{a_2} + \cdots + c_t x^{a_t} \in \fqx$, define $\delta(f) = \gcd(a_1,a_2,\ldots,a_t,q-1)$.
The set of nonzero roots of $f$ in $\F_q$ is the union of no more than
$$ 2 \left( \frac{q-1}{\delta} \right)^{1-1/(t-1)} $$
cosets of two subgroups $H_1 \subseteq H_2$ of $\fqstar$, where
\begin{align*}
\lv H_1 \rv = \delta \;\; \textnormal{   and   } \;\;
\lv H_2 \rv \geq \delta^{1-1/(t-1)} (q-1)^{1/(t-1)}.
\end{align*}

\end{thm}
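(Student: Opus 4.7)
The plan is as follows. Observe first that the subgroup $H_1 \subset \fqstar$ of order $\delta$ (the unique subgroup of $\delta$-th roots of unity in $\F_q$) acts trivially on every monomial of $f$: for $\zeta \in H_1$ we have $\zeta^{a_i} = 1$ because $\delta \mid a_i$, so $f(\zeta x) = f(x)$. Consequently the nonzero root set of $f$ is $H_1$-invariant, hence a union of $H_1$-cosets. The heart of the proof is therefore to bound the number of these cosets.

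Next, substitute $y = x^\delta$ and let $b_i = a_i/\delta$, obtaining $\tilde f(y) = \sum_i c_i y^{b_i}$ with $f(x) = \tilde f(x^\delta)$ and $\gcd(b_1,\ldots,b_t,(q-1)/\delta) = 1$. The $\delta$-th power map $\pi : \fqstar \to \fqstar$ has image equal to the unique subgroup $K \subset \fqstar$ of order $(q-1)/\delta$ and kernel $H_1$, so the $H_1$-cosets of roots of $f$ are in bijection with the roots of $\tilde f$ lying in $K$. The problem is thus reduced to counting and structurally describing the roots of $\tilde f$ inside the cyclic group $K$ of order $(q-1)/\delta$.

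The final step is a Descartes-style analysis internal to $K$, run as a dichotomy based on the refined Canetti-type parameter $D' := \min_i \max_{j \neq i} \gcd(b_i - b_j, (q-1)/\delta)$. If $D'$ is small (constant), adapt the proof of Theorem \ref{D} to the cyclic group $K$ to directly obtain at most $2((q-1)/\delta)^{1-1/(t-1)}$ roots; in this regime take $H_2 = H_1$. Otherwise, $D' \geq ((q-1)/\delta)^{1/(t-1)}$, and there exist indices $i, j$ for which $y^{b_i}/y^{b_j}$ is constant on each coset of the subgroup $H_2' \subset K$ of order $D'$, so $\tilde f$ restricts to a $(t-1)$-nomial (in a single multiplicative variable) on every such coset. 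On any coset either $\tilde f$ vanishes identically (contributing one full $H_2'$-coset to the root set) or it has only finitely many roots, controlled inductively on $t$. Pulling $H_2'$ back through $\pi$ produces the subgroup $H_2 := \pi^{-1}(H_2') \supseteq H_1$ of order $\delta \cdot D' \geq \delta \cdot ((q-1)/\delta)^{1/(t-1)} = \delta^{1-1/(t-1)}(q-1)^{1/(t-1)}$, as required.

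The main obstacle is this final dichotomy: one must simultaneously extract a structural subgroup $H_2$ and maintain an overall coset count of at most $2((q-1)/\delta)^{1-1/(t-1)}$, which forces a refined proof of Theorem \ref{D} carried out inside a proper subgroup of $\fqstar$ together with careful bookkeeping that merges the two regimes (few isolated roots vs.\ identical vanishing on a large coset) across the induction on $t$. Getting the $\tilde f$-bound on $K$ to key off $|K|$ rather than $q-1$—so that the $((q-1)/\delta)^{1-1/(t-1)}$ factor, and not the weaker $(q-1)^{1-1/(t-1)}$, appears—is what ultimately makes the statement sharper than a direct application of Theorem \ref{D}.
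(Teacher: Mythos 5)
Your first two steps are correct, and they match the standard reduction: the root set of $f$ is invariant under multiplication by $H_1$ (since $\delta$ divides every $a_i$), and the $\delta$-th power map $\pi$ identifies the $H_1$-cosets of roots of $f$ with the roots of $\tilde f(y)=\sum_i c_i y^{b_i}$ lying in the subgroup $K$ of order $N:=(q-1)/\delta$, where $\gcd(b_1,\ldots,b_t,N)=1$. (Note that the paper itself does not prove this theorem --- it quotes it from \cite{rojas1} --- but the same machinery is exactly the content of the paper's proof of Theorem \ref{bound}, which is the natural benchmark.) The genuine gap is your third step: the dichotomy on $D'=\min_i\max_{j\neq i}\gcd(b_i-b_j,N)$ does not close, because neither branch meets the budget of $2N^{1-1/(t-1)}$ cosets in the intermediate range of $D'$. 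If $1<D'<N^{1/(t-1)}$, your first branch (Theorem \ref{D} transplanted to $K$) gives only $2N^{1-1/(t-1)}(D')^{1/(t-1)}+O\bigl(N^{1-2/(t-1)}(D')^{2/(t-1)}\bigr)$, which exceeds the budget for every $D'>1$; even at $D'=1$ you cannot invoke Theorem \ref{D} as a black box because of its error term (removing that term is precisely what Lemma \ref{packing} is for). In your second branch, merging the extremal pair does make $\tilde f$ a $(t-1)$-nomial on each of the $N/D'$ cosets of $H_2'$, and the identically-vanishing cosets are within budget; but the non-vanishing cosets must be handled by your induction on $t$, contributing on the order of $(N/D')\cdot (D')^{1-1/(t-2)} = N/(D')^{1/(t-2)}$ further cosets, which is at most $N^{1-1/(t-1)}$ only when $D'\geq N^{(t-2)/(t-1)}$ --- far above your threshold $N^{1/(t-1)}$ once $t\geq 4$. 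Moreover, the induction produces cosets of a whole chain of intermediate subgroups of varying orders, which cannot be packaged into the required statement involving just two subgroups $H_1\subseteq H_2$.

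The idea your proposal is missing is that $H_2$ should come not from the arithmetic of the exponent differences (no parameter like $D'$ appears in the real proof at all) but from the defect of the power map produced by the packing argument. After your reduction, normalize $b_1=0$ and apply Lemma \ref{packing} inside $K$ with modulus $N$: this yields $e\in\{1,\ldots,N-1\}$ and $v\in N\Z^{t-1}$ with $0<M:=\max_i \lv eb_i+v_i\rv \leq N^{1-1/(t-1)}$. If $k:=\gcd(e,N)=1$, then $y\mapsto y^e$ permutes $K$ and a degree count gives at most $2M$ roots of $\tilde f$ in $K$, i.e.\ at most $2N^{1-1/(t-1)}$ cosets of $H_1$ after pulling back. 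If $k>1$, cover $K$ by the $k$ cosets of its subgroup of order $N/k$; each coset on which the substituted polynomial vanishes identically pulls back to an $H_2$-coset with $H_2=\pi^{-1}(H_2')\supseteq H_1$, and the crucial observation is that $k$ divides every $eb_i+v_i$ while not all of them vanish, hence $k\leq M$. This single divisibility fact does both jobs at once: it forces $\lv H_2\rv = \delta N/k \geq \delta N^{1/(t-1)} = \delta^{1-1/(t-1)}(q-1)^{1/(t-1)}$, and a short computation (vanishing cosets counted once, non-vanishing cosets contributing at most $\tfrac{1}{k}\deg$ isolated roots each) keeps the total number of cosets at most $2M\leq 2N^{1-1/(t-1)}$. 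No induction on $t$ is needed, and the two regimes you struggled to merge never arise.
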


This result does not immediately yield any bound on the number of roots $R(f)$ since there is no upper bound given for the size of the $H_2$-cosets.
However, if for some reason we were assured that the set of roots was a union of only $H_1$-cosets, we could conclude
$$ R(f) \leq  \delta \cdot 2 \left( \frac{q-1}{\delta} \right)^{1-1/(t-1)} = 2 (q-1)^{1-1/(t-1)} \delta^{1/(t-1)} ,$$
which is an improvement on Theorem \ref{D} since it can be easily checked that $\delta(f) \leq D(f)$ always.

\begin{thm} \label{coset}
For $f(x) = c_1 x^{a_1} + \cdots + c_t x^{a_t} \in \F_q[x]$ (with $c_i$ nonzero),  define
\begin{align*}
S(f) &:= \set{k \divides (q-1)}{\textnormal{for all } i, \textnormal{ there is a } j \neq i \text{ with } a_i \equiv a_j \bmod k}.
\end{align*} 
If $f$ vanishes completely on a coset of size $k$, then $k \in S(f)$.
\end{thm}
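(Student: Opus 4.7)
The plan is to reduce vanishing of $f$ on a coset of size $k$ to the statement that a low-degree polynomial vanishes on too many points. First I would observe that any coset of size $k$ in $\fqstar$ has the form $\alpha H$, where $H$ is the unique subgroup of $\fqstar$ of order $k$; in particular $k \divides q-1$ and $H$ is exactly the set of $k$-th roots of unity. So I assume $\alpha \in \fqstar$ with $f(\alpha \zeta) = 0$ for every $\zeta \in H$.

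Next, set $b_i := c_i \alpha^{a_i}$ and use $\zeta^k = 1$ to rewrite $\zeta^{a_i} = \zeta^{a_i \bmod k}$. Grouping by residue class mod $k$, let $I_r := \{i : a_i \equiv r \bmod k\}$ and $B_r := \sum_{i \in I_r} b_i$ for $r \in \{0,1,\ldots,k-1\}$. The vanishing of $f$ on $\alpha H$ then becomes
\[
\sum_{r=0}^{k-1} B_r \zeta^r = 0 \quad \text{for every } \zeta \in H.
\]
The key observation is that the polynomial $P(y) := \sum_{r=0}^{k-1} B_r y^r \in \fqx$ has degree less than $k$ yet vanishes at all $k$ elements of $H$, so $P \equiv 0$. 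Hence $B_r = 0$ for every $r$.

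To finish, suppose for contradiction that some index $i$ has no $j \neq i$ with $a_j \equiv a_i \bmod k$. Then $I_{a_i \bmod k} = \{i\}$, which forces $B_{a_i \bmod k} = b_i = c_i \alpha^{a_i}$. This is nonzero because $c_i \neq 0$ and $\alpha \in \fqstar$, contradicting $B_r = 0$. Therefore every $i$ must share its residue class modulo $k$ with some other $j$, giving $k \in S(f)$. I do not expect any real obstacle: the argument is essentially character orthogonality on the cyclic group $H$, presented concretely through the fact that a polynomial of degree $<k$ cannot have $k$ distinct roots. The only point that requires a moment of care is recognizing that $\alpha \in \fqstar$ ensures the singleton contribution $c_i \alpha^{a_i}$ is nonzero, which is what turns the linear-algebraic conclusion $B_r = 0$ into a constraint purely on the exponents $a_i$.
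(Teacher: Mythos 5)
Your proof is correct and takes essentially the same route as the paper: where you substitute $x = \alpha y$ and observe that the resulting degree-$<k$ polynomial $P(y)$ vanishes on all $k$ elements of $H$ (hence is identically zero), the paper equivalently works in the quotient ring $\F_q[x]/\langle x^k - \beta\rangle$ with $\beta = \alpha^k$ and notes that $f$ must reduce to the zero element there. In both arguments the decisive step is identical -- group the exponents by residue mod $k$ and note that a singleton residue class contributes a monomial with a nonzero (unit-times-$c_i$) coefficient that cannot cancel.
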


\begin{proof}
For some generator $g$ of $\fqstar$, let $\alpha \langle g^{\frac{q-1}{k}} \rangle$ denote a coset of the unique subgroup of order $k$ in $\fqstar$, and let $\beta = \alpha^k$.
The members of this coset are exactly the roots of the binomial $x^k - \beta$. 
So, $f$ vanishes completely on this coset if and only if $(x^k - \beta) \divides f$, or equivalently if  $f \equiv 0 \bmod (x^k - \beta)$.

To see when this happens, we view $f$ in the ring $\fqx / \langle x^k - \beta \rangle$. 
In this ring, we have the relation $x^k \equiv \beta$, so if each $a_i$ has remainder $r_i \bmod k$, then
$$  f \equiv c_1 \beta^{{\flr{a_1/k}}} x^{r_1} + \cdots + c_t \beta^{{\flr{a_t/k}}} x^{r_t} \mod (x^k - \beta). $$
Now $f$ might be identically zero (in this ring) since the $r_i$'s are not necessarily distinct. 
However, there is one obvious barrier to this: if just one $r_i$ is unique, then $f$ in particular contains the nonzero monomial $(c_i \beta^{{\flr{a_i/k}}}) x^{r_i}$. $f \equiv 0$ requires that each remainder $r_i$ has at least one ``partner" $r_j = r_i$ so that monomials can cancel. Therefore $(x^k -\beta) \divides f$ implies that, for each $i \in \{1,2,\ldots,t\}$, there is some $j \neq i$ with $a_i \equiv a_j \bmod k$.
\end{proof}

Thus $S(f)$ lists the sizes of cosets on which $f$ might possibly vanish completely.
For example, if $a_1 = 0$ and the other exponents $a_{i>1}$ are all prime to $q-1$ then $S(f) = \{1\}$, and so it is structurally impossible for $f$ to vanish completely on any nontrivial coset, regardless of choice of coefficients $c_i \in \fqstar$.
On the other hand whenever $k \in S(f)$, there is some choice of $c_i \in \fqstar$ so that $f$ does indeed vanish completely on a given coset of size $k$.

When $\textnormal{max}(S) < \delta^{1-1/(t-1)} (q-1)^{1/(t-1)}$, Theorem \ref{coset} can be combined with Theorem \ref{delta} to get a bound on $R(f)$ by ruling out the possibility of $H_2$-cosets. If $ \textnormal{max}(S)$ is any larger, Theorem \ref{delta} is no longer helpful; the most we can conclude is that
$$ R(f) \leq \lv H_2 \rv \cdot 2 \left( \frac{q-1}{\delta} \right)^{1-1/(t-1)} \leq \textnormal{max}(S) \cdot 2 \left( \frac{q-1}{\delta} \right)^{1-1/(t-1)} ,$$
which is worse than trivial: $R(f) < q - 1$. However, $S(f)$ turns out also to be independently useful for deriving sparsity-dependent bounds.

\begin{thm} \label{bound}
Let $f(x) = c_1 x^{a_1} + \cdots + c_t x^{a_t} \in \F_q[x]$ (with $c_i$ nonzero), and let $\delta(f)$ be defined as above, and let $C(f)$ denote the size of the largest coset in $\fqstar$ on which $f$ vanishes completely.
If $R(f)$ denotes the number of distinct, nonzero roots of $f$ in $\fqstar$, then we have
$$ R(f) \leq 2 (q-1)^{1-1/(t-1)} C^{1/(t-1)}, $$
and furthermore if $C < \delta^{1-1/(t-1)} (q-1)^{1/(t-1)} $, then
$$ R(f) \leq  2 (q-1)^{1-1/(t-1)} \delta^{1/(t-1)} .$$

\end{thm}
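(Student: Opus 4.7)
The plan is to derive both bounds from Theorem~\ref{delta} (Bi-Cheng-Rojas), carefully tracking which coset types can appear in its decomposition. By that theorem, the nonzero roots of $f$ form a union of at most $N := 2\bigl((q-1)/\delta\bigr)^{1-1/(t-1)}$ cosets of two subgroups $H_1 \subseteq H_2$ of $\fqstar$, with $\lv H_1 \rv = \delta$ and $\lv H_2 \rv \geq \delta^{1-1/(t-1)}(q-1)^{1/(t-1)}$.

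For the second (refined) bound, I would observe that the hypothesis $C < \delta^{1-1/(t-1)}(q-1)^{1/(t-1)}$ rules out any $H_2$-coset from the decomposition: if an $H_2$-coset did appear, then $f$ would vanish on a coset of size $\lv H_2 \rv$, forcing $\lv H_2 \rv \leq C$ and contradicting the lower bound on $\lv H_2 \rv$. Hence only $H_1$-cosets occur, giving $R(f) \leq N\delta = 2(q-1)^{1-1/(t-1)}\delta^{1/(t-1)}$.

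For the first (general) bound, I would split into two cases. When $C < \delta^{1-1/(t-1)}(q-1)^{1/(t-1)}$, the second bound combined with the elementary inequality $\delta \leq C$ (which holds because $f(x) = g(x^\delta)$ for some polynomial $g$, so the root set is invariant under the $\delta$-th roots of unity and is therefore automatically a union of vanishing cosets of size $\delta$) immediately yields $R(f) \leq 2(q-1)^{1-1/(t-1)}\delta^{1/(t-1)} \leq 2(q-1)^{1-1/(t-1)}C^{1/(t-1)}$.

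The main obstacle is the remaining case $C \geq \delta^{1-1/(t-1)}(q-1)^{1/(t-1)}$, where $H_2$-cosets may legitimately appear. The naive estimate $R(f) \leq N \cdot \lv H_2 \rv \leq NC$ overshoots the target by a factor of $(C/\delta)^{1-1/(t-1)} \geq 1$, so Theorem~\ref{delta} cannot be used as a black box. To close this gap I would re-run the underlying Bi-Cheng-Rojas argument with $C$ in place of $\delta$, exploiting the stronger hypothesis that $f$ \emph{actually} vanishes on a coset of size $C$ (not merely that the exponent gcd equals $\delta$). The aim is to show that the root set is contained in at most $2((q-1)/C)^{1-1/(t-1)}$ cosets of the unique subgroup of order $C$ in $\fqstar$, which would immediately give $R(f) \leq 2(q-1)^{1-1/(t-1)} C^{1/(t-1)}$ and finish the proof.
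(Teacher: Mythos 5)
Your treatment of the second bound matches the paper's own proof exactly: it is immediate from Theorem \ref{delta} once the hypothesis on $C$ rules out $H_2$-cosets. Your Case 1 of the first bound is also a valid shortcut (modulo the edge case: $\delta \le C$ requires $f$ to have at least one nonzero root, and if $R(f)=0$ the bound is trivial). But the entire difficulty of the theorem sits in your Case 2, and there you offer only a plan, not a proof --- and the plan points in the wrong direction. ``Re-running the Bi--Cheng--Rojas argument with $C$ in place of $\delta$'' is not a meaningful substitution: $\delta$ enters that argument algebraically, through the fact that $\delta$ divides every exponent $a_i$, so that $f(x)=g(x^\delta)$ and the root set genuinely is a union of cosets of the order-$\delta$ subgroup. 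The parameter $C$ has no such role: knowing that $f$ vanishes on one coset of size $C$ imposes no $C$-coset structure on the rest of the root set (the remaining roots can be scattered arbitrarily), so there is no reason the root set should be contained in $2((q-1)/C)^{1-1/(t-1)}$ cosets of the order-$C$ subgroup, and no adaptation of the Bi--Cheng--Rojas decomposition produces that statement.

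What the paper actually does is different, and it uses the definition of $C$ in the opposite way from your proposal: the key input is not the \emph{existence} of a vanishing coset of size $C$, but the \emph{non-existence} of vanishing cosets of size greater than $C$. The paper proves a geometry-of-numbers lemma (Lemma \ref{packing}); since $C \ge \delta$ (the only place existence is used, to ensure the lemma applies with $n = (q-1)/C$), it yields an $e$ with $0 < e < (q-1)/C$ and shifts $v_i \in (q-1)\Z$ such that $0 < M := \max_i \lv ea_i + v_i \rv \le (q-1)^{1-1/(t-1)}C^{1/(t-1)}$. If $\gcd(e,q-1)=1$, then $x \mapsto x^e$ permutes $\fqstar$, so $R(f) = R(f(x^e)) \le \deg(x^M f(x^e)) \le 2M$; no coset structure appears anywhere, only a degree bound for a low-degree representative of the function $f(x^e)$. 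If $k := \gcd(e,q-1) > 1$, one covers $\fqstar$ by the $k$ cosets $g^i \langle g^k \rangle$ and bounds the roots on each coset by $\deg(x^M f(g^i x^e)) \le 2M$; this fails only if some $f(g^i x^e)$ is identically zero, i.e.\ only if $f$ vanishes on the entire coset $g^i \langle g^k \rangle$, whose size satisfies $(q-1)/k > C$ because $k \le e < (q-1)/C$ --- exactly what the maximality of $C$ forbids. This degenerate case, which your proposal never confronts, is the one place where the full strength of the definition of $C$ is needed, and handling it is the substance of the proof. (Note also that the paper's argument needs no case split at all: it covers both of your cases uniformly.)
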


This result is a strict improvement on Theorem \ref{D}, since, as we will see, $D(f)$ is in particular an upper bound for $S(f)$ and therefore also for $C(f)$.
In fact, we can get another easily computable upper bound for $S(f)$ that is in general tighter than $D(f)$.

\begin{prop} \label{params}
For $f(x) = c_1 x^{a_1} + \cdots + c_t x^{a_t} \in \F_q[x]$ define the parameters
\begin{align*}
\delta(f) &=\gcd(a_1,a_2, \ldots,a_t, q-1) \\
D(f) &= \min_{i} \max_{j \neq i} \: \left( \gcd(a_i - a_j, q-1) \right)  \\
Q(f) &= \underset{i}{\gc} \underset{j \neq i}{\lc}(\gcd(a_i - a_j, q-1)) \\
K(f) &= \min_{i} \max_{j \neq i} \: \left( \gcd(a_i - a_j, Q)\right)  
\end{align*}
These parameters relate to $S(f)$ as follows.
\begin{itemize}
\item $\delta(f) \in S(f)$.
\item For all $k \in S(f)$, $k \divides Q(f)$.
\item $D(f)$, $Q(f)$, and $K(f)$ are all upper bounds for $S(f)$, and $K(f) \leq \min(D(f), Q(f))$.
\end{itemize}

\end{prop}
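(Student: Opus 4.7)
The plan is to verify each of the three bulleted claims separately, organizing them so that Claim 2 feeds directly into the upper bound $Q(f)$ in Claim 3, and so that the auxiliary observation $Q(f) \mid (q-1)$ supports the inequality $K(f) \leq D(f)$.

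First I would address the claim $\delta(f) \in S(f)$. Since $\delta$ divides every exponent $a_i$ by definition, each $a_i \equiv 0 \pmod{\delta}$; so in particular $a_i \equiv a_j \pmod{\delta}$ for every pair. Any $j \neq i$ thus serves as the required partner, and $\delta \mid (q-1)$ is immediate from the definition, placing $\delta$ in $S(f)$.

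Next I would handle the divisibility claim $k \mid Q(f)$ for each $k \in S(f)$. Unpacking the definition of $S(f)$: for every index $i$, there exists $j \neq i$ with $k \mid (a_i - a_j)$, and together with $k \mid (q-1)$ this gives $k \mid \gcd(a_i - a_j, q-1)$ for that particular $j$. Hence $k$ divides the inner lcm $\operatorname{lcm}_{j \neq i}\gcd(a_i - a_j, q-1)$ for every $i$, and therefore divides the outer gcd, which is exactly $Q(f)$. As an aside, noting that each term $\gcd(a_i - a_j, q-1)$ divides $q-1$, both the lcm and the subsequent gcd divide $q-1$, yielding $Q(f) \mid (q-1)$ — a fact I will want shortly.

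For the final bullet, the bound $k \leq Q(f)$ is immediate from Claim 2. For $k \leq D(f)$, I would observe that the same unpacking of $k \in S(f)$ gives $k \leq \gcd(a_i - a_j, q-1) \leq \max_{j\neq i}\gcd(a_i - a_j, q-1)$ for every $i$, so $k \leq D(f)$. For $k \leq K(f)$, I combine $k \mid Q(f)$ with $k \mid (a_i - a_j)$ (for the same $j$) to get $k \mid \gcd(a_i - a_j, Q)$, and run the same max-then-min argument. Finally, the inequality $K(f) \leq \min(D(f), Q(f))$ follows because $\gcd(a_i - a_j, Q) \leq Q$ trivially, and $\gcd(a_i - a_j, Q) \mid \gcd(a_i - a_j, q-1)$ since $Q \mid (q-1)$ — this last divisibility is the only nonobvious ingredient and is the reason I singled it out above.

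No step presents a genuine obstacle; the main thing to get right is bookkeeping the order of the $\min_i$, $\max_{j\neq i}$, $\gcd$, and $\operatorname{lcm}$ operations so that divisibility on the inside cleanly promotes to size inequality on the outside.
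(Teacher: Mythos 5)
Your proof is correct and follows essentially the same route as the paper: unpack the definition of $S(f)$ into divisibility statements, promote divisibility to size inequalities under the $\min_i \max_{j\neq i}$ structure for $D$ and $K$, and use the fact that $k$ divides one term of each inner $\textnormal{lcm}$ to get $k \mid Q$. The only difference is presentational — the paper phrases the same argument via supersets of $S$ and their maximal elements, and it treats $Q \mid (q-1)$ as obvious (it is built into its alternative formula $Q = \gcd(L_1,\ldots,L_t,q-1)$), whereas you rightly flag and prove that divisibility since it underpins $K \leq D$.
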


%%%%%%%%%%%%%%%%%%%%%%%%%%%%%%%%%%%%%%%%%%%%%%%%%%%%%%%%%%%%%%%%%%%%%% 
\section{Optimality of the Bound} 
%%%%%%%%%%%%%%%%%%%%%%%%%%%%%%%%%%%%%%%%%%%%%%%%%%%%%%%%%%%%%%%%%%%%%% 

Since the polynomial which defines a given function on $\F_q^*$ is unique only up to equivalence $\mod x^{q-1} - 1$, we restrict our attention to polynomials
with degree less than $q-1$. Thus we fix the following notation. Recall that $C(f) \leq 1$ indicates that $f$ does not vanish on any entire coset of any nontrivial subgroup of $\F_q^*$.

    \begin{multicols}{2}
    \begin{itemize}
        \item  $\mathcal{F}(q) = \set{f \in \F_q[x]}{\deg f < q - 1}$
        \item $\mathcal{F}(q,t) = \set{f \in \mathcal{F}(q)}{ f \textnormal{ has } t \textnormal{ terms}}$
        \item  $\mathcal{F}(q,t,r) = \set{f \in \mathcal{F}(q,t)}{R(f) = r}$
        \item $\mathcal{F}_1(q) = \set{f \in \mathcal{F}(q)}{C(f) \leq 1}$
        \item $\mathcal{F}_1(q,t) = \set{f \in \mathcal{F}_1(q)}{ f \textnormal{ has } t \textnormal{ terms}}$
        \item  $\mathcal{F}_1(q,t,r) = \set{f \in \mathcal{F}_1(q,t)}{R(f) = r}$
    \end{itemize}
    \end{multicols}

In this section, we consider the possibility that the bound in Theorem \ref{bound} can be improved.
Consider the binomial $f(x) = x^{(q-1)/2} + 1$. When $q$ is odd, this binomial vanishes at every non-square in $\F_q^*$, and consequently $R(f) = C(f) = (q-1)/2$.
More generally, when $(q-1)$ is divisible by $t$, the $t$-nomial $f(x) = (x^{q-1} - 1)/(x^{(q-1)/t} - 1)$ vanishes on $t-1$ cosets of size $(q-1)/t$, and so $R(f) = (q-1)(1-1/t)$.
These examples show that there is no hope to improve the bound in Theorem \ref{bound} by removing the dependence on $C(f)$. However, the proportion of polynomials
with $C(f) > 1$ is small, so it may be worthwhile to search for improved bounds for $f \in \mathcal{F}_1(q)$.

\begin{prop} \label{c2}
$$ \frac{ |\mathcal{F}(q) \setminus \mathcal{F}_1(q) | }{|\mathcal{F}(q)|} = O \left(\frac{1}{q} \right). $$
\end{prop}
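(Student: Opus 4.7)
The plan is to apply a union bound over the cosets of nontrivial subgroups of $\fqstar$. Note that $|\mathcal{F}(q)| = q^{q-1}$, and that $f \in \mathcal{F}(q) \setminus \mathcal{F}_1(q)$ exactly when $f$ vanishes on some coset $\alpha H$ with $|H| > 1$. For each divisor $k > 1$ of $q-1$ there is a unique subgroup $H_k \subseteq \fqstar$ of size $k$, and each coset $\alpha H_k$ is precisely the root set of the binomial $x^k - \beta$ with $\beta = \alpha^k$ (as already observed in the proof of Theorem \ref{coset}). So $f$ vanishes on this coset if and only if $(x^k - \beta) \mid f$.

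Next I would count how many $f \in \mathcal{F}(q)$ are divisible by a fixed binomial $x^k - \beta$: these are exactly the products $g(x)(x^k - \beta)$ with $\deg g < q-1-k$, of which there are $q^{q-1-k}$. Since $H_k$ has $(q-1)/k$ cosets, a union bound yields
$$ |\mathcal{F}(q) \setminus \mathcal{F}_1(q)| \;\leq\; \sum_{\substack{k \mid q-1 \\ k > 1}} \frac{q-1}{k} \cdot q^{q-1-k} , $$
and dividing through by $|\mathcal{F}(q)| = q^{q-1}$ gives
$$ \frac{|\mathcal{F}(q) \setminus \mathcal{F}_1(q)|}{|\mathcal{F}(q)|} \;\leq\; \sum_{\substack{k \mid q-1 \\ k > 1}} \frac{q-1}{k\, q^{k}} . $$

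Finally I would extract the order of magnitude from this sum. The dominant term comes from the smallest nontrivial divisor of $q-1$: when $q$ is odd this is $k = 2$, which contributes $(q-1)/(2q^{2}) = O(1/q)$; when $q$ is even, every divisor of $q-1$ is odd, so the smallest nontrivial one is at least $3$, contributing $O(1/q^{2})$. The remaining tail $\sum_{k \geq 3} (q-1)/(k q^{k})$ is bounded by the geometric tail $\sum_{k \geq 3} q^{1-k} = O(1/q^{2})$. Combining these gives the claimed $O(1/q)$ estimate. There is no serious obstacle here; the argument is a routine union bound, and the only points requiring mild care are (i) keeping the degree restriction $\deg f < q-1$ consistent when counting multiples of $x^{k} - \beta$, and (ii) noting that a single polynomial $f$ can vanish on multiple cosets and thus be counted several times, which only weakens the bound and is harmless for an $O(1/q)$ estimate.
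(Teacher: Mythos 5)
Your proposal is correct and follows essentially the same approach as the paper's proof: identify vanishing on a size-$k$ coset with divisibility by the binomial $x^k - \beta$, count the $q^{q-1-k}$ multiples of each such binomial, and apply a union bound over all cosets. The only difference is minor bookkeeping at the end: the paper first reduces to cosets of \emph{prime} order and then bounds the number of prime divisors of $q-1$ by $\log q$, whereas you sum over all nontrivial divisors and control the $k \geq 3$ tail by a geometric series; both routes give $O(1/q)$.
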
 
\begin{proof}

If $f \in \mathcal{F}(q)$ vanishes on a nontrivial coset, then it vanishes on a coset of prime order. Thus we can bound the number of such $f  \in \mathcal{F}(q)$ by counting polynomials of the form
$$ \left(x^p - \beta \right) \sum_{j = 0}^{q - 2 - p} c_j x^j, $$
where $p$ divides $q-1$ and $\beta$ lies in the subgroup of $\F_q^*$ of size $(q-1)/p$. Thus the proportion of $f$ with $C(f) > 1$ is bounded by
$$ \frac{1}{|\mathcal{F}(q)|} \sum_{p \: | \: q - 1} \left( \frac{q-1}{p} \right)  q^{q-1-p} 
\leq  \sum_{p \: | \: q - 1}  q^{1-p} \leq   q^{-1} +   \sum_{\substack{p \: | \: q - 1 \\ p \; > \: 2}}  q^{1-p} = \frac{1}{q} + O \left( \frac{\log q }{q^2} \right).$$
Note that we have used the well-known fact that the number of distinct prime factors of an integer $n$ is bounded by $\log n$.
\end{proof}

In \cite{rojas2}, the authors investigate the existence of sparse polynomials with many roots. When $q$ is a $t$-th power, they give the $t$-nomial
$f(x) = 1 + \sum_{i = 1}^{t-1} x^{(q^{i/t} - 1)/(q^{1/t} - 1)} \in \F_q[x]$, which has $R(f) \geq  q^{1-2/t}$. Furthermore, when $t$ is prime they show that $D(f) \leq t/2$.
In the case when $q$ is an odd square, the authors of \cite{tri} give the trinomial $f(x) = x^{q^{1/2}} + x - 2$ which has $R(f) = q^{1/2}$, and it is shown that $C(f) = 1$.
Thus, both examples are able to attain a large number of roots in $\F_q^*$ without vanishing on a large coset, and they show that the $O(q^{1-1/(t-1)})$ bound from Theorem \ref{bound}
is nearly optimal in the general setting. However, these examples both share a special property - they vanish on entire translations of a subspace of $\F_q$. We are unaware
of any example of a sparse polynomial which has a large number of roots in ``general position." Consequently, we propose that a much better bound is possible
for the special case of prime fields $\F_p$, which have no proper subfields.

Let $R_{p,t} = \max \set{R(f)}{f \in \mathcal{F}_1(p,t)}$. Obviously $R_{p,1} = 0$ and $R_{p,2} = 1$, because monomials have no roots in $\F_p^*$, and a binomial
defines a coset in $\F_p^*$ if it has a root at all. We have checked by computer that the following inequalities hold.

\begin{itemize}
\item $R_{p,3} <  1.8 \log p$ for $p \leq 139571$
\item $R_{p,4} <  2.5 \log p$ for  $p \leq 907$
\item $R_{p,5} <  2.9 \log p$ for $p \leq 101$
\end{itemize}
Therefore, the current bound of $R_{p,t} = O \left( p^{1-1/(t-1)} \right)$ appears to be far from optimal for $t$-nomials over $\F_p$ which do not vanish on a nontrivial coset.

It is easy to see that the proportion of polynomials $f \in \mathcal{F}(p)$ which have $R(f) = r$ is bounded by $1/r!$.
Indeed, simply count the proportion of polynomials of the form
$$ \left( \prod_{i=1}^r (x - \alpha_i) \right) \left( \sum_{i=0}^{p-2-r} c_i x^i \right), $$
with $\alpha_i \in \F_p^*$ distinct, which gives
$$  \frac{\binom{p - 1}{r} p^{p-1-r}}{| \mathcal{F}(p)|} =  \binom{p - 1}{r} \frac{1}{p^r}  \leq \frac{1}{r!}.$$

With this in mind, we propose that the observed logarithmic behavior of $R_{p,t}$ can be explained by the following heuristic. Let $t(f)$ denote the number of nonzero terms of $f$.

\vspace{0.2cm}
\centerline{  \textit{
\vspace{0.2cm}
$R(f)$ and $t(f)$ are statistically independent properties of a random $f \in \mathcal{F}_1(p)$.} }

This heuristic does not hold precisely, but it motivates the following conjecture, which captures the sentiment while allowing for some error.

\begin{conj} \label{con}
There exists a constant $\gamma > 0$ such that

$$ \frac{ | \mathcal{F}_1(p,t,r) |}{|\mathcal{F}_1(p,t) |}\leq \left( \frac{1}{r!} \right)^\gamma$$

for all $p$ prime, $t \in \N$, and $r \in \N$.
\end{conj}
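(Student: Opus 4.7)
The plan is to factor the target ratio as
$$ \frac{|\mathcal{F}_1(p,t,r)|}{|\mathcal{F}_1(p,t)|} \leq \frac{|\mathcal{F}(p,t,r)|}{|\mathcal{F}(p,t)|} \cdot \frac{|\mathcal{F}(p,t)|}{|\mathcal{F}_1(p,t)|}, $$
and to control each factor separately: a $1/r!$-type bound for the first, and a uniform constant $1/c$ (with $c > 1/2$) for the second. Together these give a bound of $1/(c\,r!)$, which is at most $(1/r!)^\gamma$ for $\gamma = 1 - \log_2(1/c) > 0$; the cases $r \in \{0,1\}$ are trivial since $(1/r!)^\gamma = 1$.

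For the first factor, I would adapt the generalized Vandermonde counting already used in the paper. Fix an $r$-element root set $R \subseteq \F_p^*$ and a $t$-element support $A \subseteq \{0,1,\ldots,p-2\}$. Coefficient vectors $c$ for which $f = \sum_{a \in A} c_a x^a$ vanishes on all of $R$ lie in the kernel of the $r \times t$ matrix $V_{A,R} = (\alpha^a)_{\alpha \in R,\, a \in A}$, giving at most $p^{t - \textnormal{rank}(V_{A,R})}$ solutions. When $V_{A,R}$ takes the generic rank $\min(r,t)$, summing over $A$ and $R$ and using $\binom{p-1}{r} \leq (p-1)^r/r!$ yields the desired $1/r!$ bound. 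Moreover, for $r > 2(p-1)^{1-1/(t-1)}$, Theorem \ref{bound} forces $|\mathcal{F}_1(p,t,r)| = 0$, and the conjecture is automatic.

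For the second factor, I would adapt Proposition \ref{c2} to polynomials with exactly $t$ nonzero terms. Any $f \in \mathcal{F}(p,t) \setminus \mathcal{F}_1(p,t)$ is a multiple of some $x^k - \beta$ with $k$ a prime divisor of $p-1$. Counting such $t$-term multiples over all admissible $(k, \beta)$, and using that multiples of $x^k - \beta$ form a $(p-1-k)$-dimensional subspace of $\F_p^{p-1}$, should produce $|\mathcal{F}(p,t) \setminus \mathcal{F}_1(p,t)| = O(|\mathcal{F}(p,t)|/p)$, hence $c \geq 1 - O(1/p) > 1/2$ for $p$ sufficiently large. Small $p$ can be checked directly; the case $t = 2$ is trivial because $\mathcal{F}_1(p,2,r)$ is empty for $r \geq 2$.

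The main obstacle is the intermediate regime $t < r \leq 2(p-1)^{1-1/(t-1)}$, where any $f \in \mathcal{F}(p,t)$ vanishing on an $r$-set $R$ forces $V_{A,R}$ to drop rank — a phenomenon absent over $\Q$. A promising avenue is to show that singularity of $V_{A,R}$ compels $R$ to exhibit coset structure, connecting the rank deficiencies back to the set $S(f)$ of Theorem \ref{coset} and to the Bi–Cheng–Rojas Theorem \ref{delta}, so that such configurations are incompatible with the hypothesis $f \in \mathcal{F}_1$. Making this heuristic precise and quantitative is likely the core technical challenge. A secondary subtlety is maintaining $c > 1/2$ uniformly in $p$ and $t$, especially when $p-1$ has many small prime divisors and the union bound over $(k,\beta)$ becomes delicate.
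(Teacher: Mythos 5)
First, a framing point: the statement you are proving is Conjecture \ref{con}, which the paper does \emph{not} prove. It is posed as an open problem, supported only by the computer verifications reported in Section 3 (the inequality with $\gamma = 1/2$ for $t \leq 5$ and small $p$) and by the heuristic that $R(f)$ and $t(f)$ are nearly independent for random $f \in \mathcal{F}_1(p)$; the only theorem proved about it is the conditional statement that it implies $R_{p,t} = O(t \log p)$. So there is no proof of record to compare against, and a correct argument would be a new theorem. Your proposal is, by your own admission, a program rather than a proof: the paragraph you label ``the main obstacle'' is not a technical step to be filled in later but is essentially verbatim the content of the conjecture. Showing that rank deficiency of $V_{A,R}$ over $\F_p$ forces coset structure in the root set, incompatible with $C(f) \leq 1$, is exactly what is open; moreover any such argument must use primality of $p$ in an essential way, since the examples from \cite{rojas2} and \cite{tri} quoted in Section 3 exhibit enormous rank deficiency with $C(f) = 1$ over non-prime $q$.

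Second, the reduction you set up fails before even reaching that obstacle. In the regime that matters, $t < r \leq 2(p-1)^{1-1/(t-1)}$, your first factor $|\mathcal{F}(p,t,r)|/|\mathcal{F}(p,t)|$ does not obey any bound of the form $(1/r!)^{\gamma}$, because $\mathcal{F}(p,t,r)$ still contains the coset-vanishing polynomials that the conjecture deliberately excludes. Concretely, take $t = 3$, fix any $\gamma > 0$, fix an integer $k$ with $1/(2k^3) > (1/k!)^{\gamma}$, and let $p \equiv 1 \pmod{k}$ be large. Trinomials of the form $x^{a_1} h(x^k)$, where $h$ is a trinomial with exponents below $(p-1)/k$ having exactly one root in the subgroup of $k$-th powers, vanish on a full coset of size $k$ and have exactly $r = k$ nonzero roots; counting supports (about $p^3/k^2$ triples of exponents congruent mod $k$) and coefficient vectors (about $p^3/k$ per support) shows these alone give $|\mathcal{F}(p,3,k)|/|\mathcal{F}(p,3)| \geq 1/(2k^3) > (1/k!)^{\gamma}$, uniformly as $p \to \infty$, and $k \leq 2(p-1)^{1/2}$ eventually, so this sits squarely inside your intermediate regime. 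Thus relaxing the numerator from $\mathcal{F}_1(p,t,r)$ to $\mathcal{F}(p,t,r)$ is fatal: factorial decay is a property of coset-free polynomials only. Relatedly, for $r > t$ your ``generic rank'' case contributes nothing at all, since $\textnormal{rank}(V_{A,R}) = t$ forces the kernel to be zero; every polynomial you need to count comes from a rank-deficient configuration, so the deferred case is not an edge case but all of $\mathcal{F}(p,t,r)$. (Your second factor, that $|\mathcal{F}_1(p,t)| \geq c\,|\mathcal{F}(p,t)|$ for a uniform constant $c$, is plausible and should follow from Theorem \ref{coset} by a count in the spirit of Proposition \ref{c2}, but it is the harmless part.) As it stands, the proposal reduces the conjecture to itself.
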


We have checked by computer that the inequality in Conjecture \ref{con} holds with $\gamma = 1/2$ for all $r \in \N$ in the following cases.

\begin{itemize}
\item $t = 3$, $p \leq 30977$
\item $t = 4$, $p \leq 907$
\item $t = 5$, $p \leq 101$
\end{itemize}

\begin{thm}
If Conjecture \ref{con} is true, then $R_{p,t} = O \left(t \log p \right)$.
\end{thm}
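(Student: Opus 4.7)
The plan is to exploit the extremal nature of $R_{p,t}$: since $R_{p,t}$ is the maximum of $R(f)$ over $\mathcal{F}_1(p,t)$, the set $\mathcal{F}_1(p,t,R_{p,t})$ contains at least one element, so $|\mathcal{F}_1(p,t,R_{p,t})| \geq 1$. Applying Conjecture \ref{con} at $r = R_{p,t}$ and rearranging promotes this trivial lower bound on the numerator into a nontrivial lower bound on the denominator:
$$(R_{p,t}!)^\gamma \leq |\mathcal{F}_1(p,t)|.$$

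Next, I would bound $|\mathcal{F}_1(p,t)|$ from above by the crude total count $|\mathcal{F}(p,t)| = \binom{p-1}{t}(p-1)^t \leq p^{2t}$, obtained by specifying the $t$ exponents in $\{0,1,\ldots,p-2\}$ and the $t$ nonzero coefficients in $\F_p^*$. Combining the two inequalities and taking logarithms yields
$$\gamma \log(R_{p,t}!) \leq 2 t \log p.$$

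To finish, I would invoke the weak form of Stirling's approximation $\log(r!) \geq r \log(r/e)$, which converts the factorial bound into $R_{p,t} \log(R_{p,t}/e) \leq (2/\gamma)\, t \log p$. If $R_{p,t} \geq e^2$, the logarithmic factor on the left is at least $1$ and we conclude $R_{p,t} \leq (2/\gamma)\, t \log p$; otherwise $R_{p,t}$ is bounded by an absolute constant and the conclusion is immediate. Either way, $R_{p,t} = O(t \log p)$.

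There is no serious obstacle: the argument is essentially a one-line rearrangement of the conjecture paired with the trivial enumeration of $t$-nomials. The reason so little is needed is that Conjecture \ref{con} controls $|\mathcal{F}_1(p,t,r)|$ uniformly in $r$, while the theorem uses it only at the single extremal value $r = R_{p,t}$; any polynomial-in-$p$ upper bound on $|\mathcal{F}_1(p,t)|$ suffices because it enters the final estimate only under a logarithm.
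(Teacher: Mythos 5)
Your proposal is correct and follows essentially the same route as the paper: the paper likewise combines nonemptiness of $\mathcal{F}_1(p,t,R_{p,t})$, the crude count $|\mathcal{F}_1(p,t)| \leq p^{2t}$, and Stirling's approximation to conclude $R_{p,t} \leq (2/\gamma)\, t \log p$. If anything, your explicit use of $\log(r!) \geq r \log(r/e)$ with a separate small-$R_{p,t}$ case is slightly more careful than the paper's asymptotic invocation of Stirling.
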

\begin{proof}
Suppose Conjecture \ref{con} is true. Then we have
$$ | \mathcal{F}_1(p,t,r) | \leq |\mathcal{F}_1(p,t) | \left( \frac{1}{r!} \right)^\gamma \leq p^{2t} / (r!)^\gamma. $$
If $p^{2t} / (r!)^\gamma < 1$ then the set $\mathcal{F}_1(p,t,r) $ is empty, so we must have $p^{2t} / (R_{p,t}!)^\gamma \geq 1$, or equivalently,
$\log(R_{p,t}!) \leq \log(p^{2t/\gamma})$. Applying Stirling's approximation, we get
$$ R_{p,t} \leq R_{p,t} \log R_{p,t} \sim \log(R_{p,t}!) \leq (2/\gamma) t \log p = O\left( t \log p\right).$$
\end{proof}

For a more detailed account of the computational and heuristic support for the conjectural logarithmic bound in the case of trinomials, see \cite{rojas2} and \cite{tri}.

%%%%%%%%%%%%%%%%%%%%%%%%%%%%%%%%%%%%%%%%%%%%%%%%%%%%%%%%%%%%%%%%%%%%%% 
\section{Proofs} 
%%%%%%%%%%%%%%%%%%%%%%%%%%%%%%%%%%%%%%%%%%%%%%%%%%%%%%%%%%%%%%%%%%%%%% 

The general strategy employed here (and in both \cite{rojas1} and \cite{canetti}) for obtaining sparsity-dependent bounds on $R(f)$ can be loosely sketched as follows.
Consider integers $e$ prime to $q-1$, which have the property that the map $x \mapsto x^e$ is a bijection on $\fqstar$.
Since  $x \mapsto x^e$ simply permutes the elements of $\F_q^*$, we have $R(f(x)) = R(f(x^e))$. 
Furthermore, $f(x^e)$ is equivalent (as a mapping on $\fqstar$) to any $g(x) = c_1 x^{b_1} + \cdots + c_t x^{b_t}$ with $b_i\equiv ea_i \bmod (q-1)$.
Thus the basic idea is to find some $e$ so that the remainders of $ea_i \bmod (q-1)$ are all small, yielding a $g$ of small degree, and so $R(f) = R(g) \leq deg(g)$.

The following lemma, a fact about the geometry of numbers, will be our main tool for achieving the desired degree reduction. 

\begin{lemma} \label{packing}
Fix the natural numbers $a_1 , a_2 ,\ldots ,a_t , N$. 
If $n \leq N/ \gcd(a_1,a_2,\ldots,a_t,N)$, there is an $e \in \{1,2,\ldots,n-1\}$ and a $v \in N \Z^t$ so that 
$$0 < \max_{1 \leq i \leq t} \; \lv ea_i + v_i \rv \leq N/n^{1/t}.$$

\end{lemma}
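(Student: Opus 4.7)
The plan is to deduce the lemma from Minkowski's first theorem applied to a suitable rank-$(t+1)$ lattice in $\R^{t+1}$. I would take $\Lambda$ to be the lattice generated by the vector $(1, a_1, \ldots, a_t)$ together with $N$ times each of the last $t$ standard basis vectors; the generating matrix is upper-triangular with diagonal $(1, N, \ldots, N)$, so $\Lambda$ has covolume $N^t$, and each of its elements has the form $(e, ea_1 + v_1, \ldots, ea_t + v_t)$ with $e \in \Z$ and $v \in N\Z^t$. Writing $L := N/n^{1/t}$, the symmetric box $[-n, n] \times [-L, L]^t$ has volume exactly $2^{t+1} N^t$, meeting Minkowski's threshold for covolume $N^t$.

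To force $|e| \leq n-1$ rather than merely $|e| \leq n$, I would apply the \emph{open} form of Minkowski's theorem to the slightly enlarged box $K_\epsilon = (-n, n) \times (-L - \epsilon, L + \epsilon)^t$, whose volume $2^{t+1} n (L+\epsilon)^t$ strictly exceeds $2^{t+1} N^t$ for every $\epsilon > 0$. This produces a nonzero lattice point in $K_\epsilon$ with $|e| < n$ (hence $|e| \leq n-1$) and $|s_i| < L + \epsilon$ for each $i$. Because only finitely many lattice points lie in any bounded neighborhood, one such point works for all small enough $\epsilon$ and therefore satisfies $|s_i| \leq L$.

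The final step is to rule out two degenerate outcomes. If $e = 0$, each $s_i$ lies in $N\Z \cap [-L, L]$; since $L < N$ (using $n \geq 2$, else the lemma is vacuous), this forces $s_i = 0$ and contradicts nonzeroness. If instead $e \neq 0$ but all $s_i = 0$, then $N \mid ea_i$ for each $i$, which forces $N/\delta \mid e$ with $\delta = \gcd(a_1, \ldots, a_t, N)$, giving $|e| \geq N/\delta \geq n$, again a contradiction. Hence $e$ and $\max_i |s_i|$ are both nonzero; negating if necessary puts $e \in \{1, \ldots, n-1\}$, and setting $v_i := s_i - ea_i \in N\Z$ delivers the required conclusion. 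The main subtlety is precisely the need for the open, enlarged box: the closed box $[-n, n] \times [-L, L]^t$ admits the boundary lattice point $(N/\delta, 0, \ldots, 0)$ as a solution when $n = N/\delta$, and that point violates the $\max > 0$ requirement; the $\epsilon$-perturbation is exactly what eliminates this edge case.
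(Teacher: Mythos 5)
Your proof is correct, and it takes a genuinely different route from the paper's. The paper stays in dimension $t$: it places the $n$ points $l_i = i(a_1,\ldots,a_t)$, $1 \le i \le n$, in the torus $(\R/N\Z)^t$, shows by a direct packing argument (disjoint boxes of volume $d^t$ inside a fundamental domain of volume $N^t$) that two of them lie within $N/n^{1/t}$ of each other in the torus metric, and takes $e = j-i$ for the closest pair; the hypothesis $n \le N/\gcd(a_1,\ldots,a_t,N)$ enters to guarantee this difference is nonzero, via the order of the subgroup of $(\Z/N\Z)^t$ generated by $(a_1,\ldots,a_t)$. You instead pass to the rank-$(t+1)$ lattice generated by $(1,a_1,\ldots,a_t)$ and $N\Z^t$, where the extra coordinate tracks the multiplier $e$, and quote Minkowski's convex body theorem. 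The two arguments are cousins---Minkowski's theorem is itself proved by a volume-packing argument---so what yours buys is the convenience of a standard black box, at the cost of two edge cases that the paper's formulation sidesteps automatically: since your box volume exactly meets the Minkowski threshold $2^{t+1}N^t$, you need the asymmetric box (open in the $e$-coordinate, $\epsilon$-enlarged in the others) together with the finiteness-and-limit argument to recover the closed bounds with $|e| \le n-1$; and you must exclude the degenerate lattice points $(0,s_1,\ldots,s_t)$ and $(e,0,\ldots,0)$ by hand. Both exclusions are handled correctly: your computation that $N \mid ea_i$ for all $i$ forces $(N/\delta) \mid e$, with $\delta = \gcd(a_1,\ldots,a_t,N)$, is precisely the paper's subgroup-order argument in different clothing, and your identification of the boundary point $(N/\delta,0,\ldots,0)$ when $n = N/\delta$ shows the $\epsilon$-perturbation is genuinely necessary rather than fussiness. (One shared pedantry: for $n=1$ the conclusion requests $e$ from an empty set, so both your proof and the paper's implicitly assume $n \ge 2$, which is harmless in the application.)
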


\begin{proof}
Consider the vectors $l_i = i(a_1,\ldots,a_t) = (ia_1,\ldots,ia_t) \in (\R / N \Z)^t$ for $i \in \{1,2,\ldots n\}$.
Let $\|\cdot\|_\infty$ denote the standard infinity norm on $\R^t$.
We wish to view these vectors geometrically as points in $\R^t$, but they are only defined up to equivalence in $(\R / N \Z)^t$, so define
$$ \| l \|_N  = \min_{v \in N\Z^t} \|l + v\|_\infty,$$
which gives the smallest norm of any representative of the equivalence class $l + N\Z^t$ viewed as a point in $\R^t$
(equivalently, $\| l \|_N$ gives the distance from $l$ to the nearest lattice point in $N\Z^t$).
Suppose that
$$d = \min_{i \neq j} \|l_j - l_i \|_N .$$
Since the vectors are all at least $d$ apart, the sets 
$$B_i = \set{x \in (\R / N \Z)^t}{\|x - l_i \|_N < d/2} $$ 
are disjoint, so each $l_i$ sits in its own personal box of volume $d^t$. 
We may choose to represent these $n$ disjoint sets uniquely in the fundamental domain $[0,N)^t$, which has volume $N^t$. 
Therefore we have a total volume of $n \cdot d^t$ sitting in a volume of $N^t$; we conclude that $d \leq N/n^{1/t}$.

Note that the modular definition of distance is crucial here; consider instead $n$ points in $[0,N]^t$ that are $d$-separated only in the standard $l_\infty$ metric.
A volume-packing argument becomes more complicated in this case because the box around a point near the boundary may lie partly outside $[0,N]^t$ (it doesn't ``wrap around"), and so some points do not absorb a full $d^t$ worth of volume from $[0,N]^t$.

To finish, we find $i,j$ (with $1 \leq i<j \leq n$) so that $\|l_j - l_i \|_N = d$ and set
$ l_e = l_{(j-i)}= (j-i)(a_1,\ldots,a_t) = l_j - l_i .$
We have  
$$\| l_e \|_N = \min_{v \in N\Z^t} \|(ea_1,\ldots,ea_t) + v\|_\infty \leq N/n^{1/t}, $$
and $e$ satisfies $1 \leq e \leq n - 1$.
The subgroup of $(\Z/N\Z)^t$ generated by $(a_1, \ldots, a_n)$ has order $N/ \gcd(a_1,\ldots,a_t,N) \geq n$. Since $0 < e < n$, 
$e(a_1, \ldots, a_n)  \not\equiv (0,\ldots,0) \in (\Z/N\Z)^t$, which verifies that $\| l_e \|_N > 0$.

\end{proof}

Lemma \ref{packing} and its proof are extremely similar in spirit to the argument used by Canetti et al. in \cite{canetti}. 
They also viewed the $n$ vectors as points in $[0,N)^t$, but to find a pair of nearby points they partitioned the hypercube into $< n$ equally-sized sub-cubes and appealed to the pigeonhole principle.
Here we were able to avoid this discretization of space which lead to the small asymptotic term appearing in Theorem \ref{D}, which turns out to be unnecessary.

\begin{proof}[of Theorem \ref{bound}]
The second claim is immediate from Theorem \ref{delta}, since there can be no $H_2$-cosets of roots.
We now prove the first claim.

Let $f(x) = c_1 x^{a_1} + c_2 x^{a_2} + \cdots + c_t x^{a_t} \in \F_q[x]$ with $c_i$ nonzero, and let $C$ denote the size of the largest coset in $\fqstar$ on which $f$ vanishes completely.
For our purposes, we may assume that $a_1 = 0$, since otherwise we can write
\begin{align*}
 f(x) &= x^{a_1} \tilde{f}(x) \\
 \tilde{f}(x) &= c_1 + c_2 x^{a_2 - a_1} + \cdots + c_t x^{a_t - a_1},
\end{align*}
showing that $f$ has a root at zero, but its nonzero roots are just the roots of $\tilde{f}$, so $R(f) = R(\tilde{f})$ and $C(f) = C(\tilde{f})$.
Therefore we continue assuming that $a_1 = 0$.

Consider $\delta(f) = \gcd(a_2, \ldots, a_t, q-1)$. 
The nonzero roots of $f(x) = c_1 + c_2 x^{a_2} + \cdots + c_t x^{a_t}$ are in one-to-one correspondence with the solutions of the system
\begin{align*}
c_1 + c_2 y^{a_2/\delta} + \cdots + c_t y^{a_t/\delta} &= 0 \; &y \in \angl{g^\delta} \\
x^\delta &= y \: &x \in \fqstar
\end{align*}
If $f$ has no roots in $\fqstar$ then our bound is of course true, so suppose this system has at least one solution $(y_0,x_0)$.
Then in fact the system has at least $\delta$ solutions and $f$ vanishes on the coset $\set{x}{x^\delta = y_0}$.
This allows us to conclude that $C \geq \delta$, and so $\left(\frac{q-1}{C}\right) \leq (q-1)/ \gcd(a_2, \ldots, a_t, q-1)$.

Therefore we can apply Lemma \ref{packing} to find an $e \in \{1,2,\ldots,\frac{q-1}{C}-1\}$ and a $v \in (q-1) \Z^{t-1}$ so that 
$$0 < \| (ea_2, \ldots, ea_t) + v\|_\infty \leq \left(q-1 \right)/\left( \frac{q-1}{C} \right)^{1/(t-1)} .$$
Suppose $k = \gcd(e,q-1) = 1$.
Then the mapping $x \mapsto x^e$ is a bijection on $\fqstar$ that simply re-orders the elements of $\F_q^*$, thus $R(f(x)) = R(f(x^e))$.
We are interested in $f(x^e)$ only as a function on $\fqstar$ (rather than as a formal object in $\fqx$), and since $\fqstar$ is a group of order $q-1$, this function is not changed by shifting its exponents by $v_i \in (q-1)\Z$. Thus we may represent the function $f(x^e)$ as the (possibly Laurent) polynomial 
$$ f(x^e) = c_1 + c_2 x^{ea_2 + v_2} + \cdots + c_t x^{ea_t + v_t} ,$$
which satisfies 
$$0 < M = \max_{1 \leq i \leq t} \lv ea_i + v_i \rv \leq (q-1)^{1-1/(t-1)} C^{1/(t-1)}. $$
Again we are only interested in nonzero roots; note that $R(f(x^e)) = R(x^M f(x^e))$.
Since $x^M f(x^e)$ is an honest polynomial in $\fqx$ with non-negative exponents, we have $R(f) = R(x^M f(x^e)) \leq \deg(x^M f(x^e)) \leq 2M$ and we are done.

However, we might have $k = \gcd(e, q-1) > 1$. 
In this case $x \mapsto x^e$ is not a bijection - it takes $\fqstar = \angl{g}$ to a smaller subgroup $\angl{g^e} = \angl{g^k}$ of size $\left( \frac{q-1}{k} \right)$. 
However, we can still cover $\fqstar$ by $k$ cosets of this subgroup.
We have
$$ R(f(x^e)) = \sum_{i = 0}^{k-1} \frac{1}{k} R(f(g^i x^e)) ,$$
since $\fqstar = \bigcup_{i = 0}^{k-1} g^i \angl{g^e}$, and $x^e = y$ has $k$ solutions for each $y \in \angl{g^e}$.
Now we repeat our earlier tricks and arrive at 
$$ R(f) \leq \sum_{i = 0}^{k-1} \frac{1}{k} \deg (x^M f(g^i x^e)) \leq 2M ,$$
except that we must be careful that no $f(g^i x^e)$ is identically zero, preventing us from using degree to bound root number.
If  $f(g^i x^e)$ is identically zero then $f$ vanishes completely on the coset $g^i \angl{g^e} = g^i \angl{g^k}$ of size $\left(\frac{q-1}{k}\right)$.
However, since $k = \gcd (e,q-1) \leq e < \left( \frac{q-1}{C} \right)$, we have
$$\frac{q-1}{k} > \frac{q-1}{\left(\frac{q-1}{C}\right)} = C,$$ 
so this is impossible by the definition of $C$; the cosets are too large for $f$ to vanish on completely.

\end{proof}

\begin{proof}[of Proposition \ref{params}]
For $f(x) = c_1 x^{a_1} + \cdots + c_t x^{a_t} \in \fqx$, we have the following equivalent definitions for $S$:
\begin{align*}
S(f) &= \set{k \divides (q-1)}{\forall i, \exists j \neq i \text{ such that } a_i \equiv a_j \bmod k}  \\
&= \set{k \divides (q-1)}{\forall i, \exists j \neq i \text{ such that } k \divides (a_i - a_j)}\\
&= \set{k \in \N}{\forall i, \exists j \neq i \text{ such that } k \divides \gcd(a_i - a_j, \: q-1)} \\
&= \bigcap_{i = 1}^{t} \bigcup_{j \neq i} \set{k \in \N}{k \divides \gcd(a_i - a_j, \: q-1) }.
\end{align*}
Clearly by the second definition we have $\delta(f) = \gcd (a_1,a_2, \ldots, a_t, q-1) \in S$.
From the fourth definition we can get an upper bound for $S$ by passing to the superset
$$ \bigcap_{i = 1}^{t} \bigcup_{j \neq i} \set{k \in \N}{k \leq \gcd(a_i - a_j, \: q-1) } \supseteq S ,$$
which has maximal element
$$D = \min_{i} \max_{j \neq i} \: \left( \gcd(a_i - a_j, q-1)\right) . $$
Alternatively, by considering a different lattice structure on the integers, we can pass to the superset
$$ \bigcap_{i = 1}^{t} \set{k \in \N}{k \divides \gcd(L_j, q-1) )} = \set{k \in \N}{k \divides Q}   \supseteq S ,$$ 
where
\begin{align*}
L_i &= \textnormal{lcm}(a_i - a_1, \ldots, a_i - a_{i-1}, a_i - a_{i+1}, \ldots ,a_i - a_t) ,\\
Q &= \gcd(L_1, \ldots, L_t , q - 1) = \underset{i}{\gc} \underset{j \neq i}{\lc}(\gcd(a_i - a_j, q-1)) .
\end{align*}

Since we now know that, in the end, any member of $S$ must be a divisor of $Q$, we can redefine $S$ (equivalently) using a smaller ambient space:
\begin{align*}
S(f) &= \set{k \divides Q}{\forall i, \exists j \neq i \text{ such that } k \divides (a_i - a_j)}  \\
       &= \bigcap_{i = 1}^{t} \bigcup_{j \neq i} \set{k \in \N}{k \divides \gcd(a_i - a_j, \: Q) } \\
       &\subseteq \bigcap_{i = 1}^{t} \bigcup_{j \neq i} \set{k \in \N}{k \leq \gcd(a_i - a_j, \: Q) }.
\end{align*}
Considering the maximal element of this last superset of $S$ gives the final upper bound
$$ K = \min_{i} \max_{j \neq i} \: \left( \gcd(a_i - a_j, Q) \right) , $$  
which is obviously no larger than either $D$ or $Q$.
\end{proof}

%%%%%%%%%%%%%%%%%%%%%%%%%%%%%%%%%%%%%%%%%%%%%%%%%%%%%%%%%%%%%%%%%%%%%%

\section*{Acknowledgement}
I would like to thank my advisor, Dr.\ J.M.\ Rojas, for introducing me to this problem, and the Texas A\&M Supercomputing Facility for access to computational resources.

\end{document}